\documentclass[a4paper,reqno,10pt]{amsart}

\usepackage[
  colorlinks=true,
  linkcolor=blue,
  citecolor=blue,
  urlcolor=blue
]{hyperref}
\usepackage{a4wide}
\usepackage{amssymb,amsmath,amsthm,amsfonts,mathtools}

\newtheorem{theorem}{Theorem}[section]
\newtheorem{proposition}[theorem]{Proposition}
\newtheorem{lemma}[theorem]{Lemma}
\newtheorem{corollary}[theorem]{Corollary}
\theoremstyle{definition}
\newtheorem{definition}[theorem]{Definition}
\newtheorem{construction}[theorem]{Construction}
\newtheorem{remark}[theorem]{Remark}
\newtheorem*{theoremA}{Theorem A}
\newtheorem*{theoremB}{Theorem B}
\newtheorem*{organization}{Organization}
\newtheorem*{conventions}{Conventions and notation}
\newtheorem*{aiuse}{Use of AI}

\newcommand{\kfield}{k}
\newcommand{\rad}{\operatorname{rad}}
\newcommand{\End}{\operatorname{End}}
\newcommand{\ind}{\operatorname{ind}}
\newcommand{\gldim}{\operatorname{gl.dim}}

\title[The Cartan determinant conjecture]
{The Cartan determinant conjecture for representation-finite algebras}

\author[H.\ Enomoto]{Haruhisa Enomoto}
\address{Parakeet Inc., Japan}
\email{haruhisa.enomoto.math@gmail.com}

\subjclass[2020]{16G10, 16E10, 16G60}
\keywords{Cartan determinant conjecture, representation-finite algebra,
standard algebra, mesh category, positive grading, endomorphism algebra}
\date{\today}

\begin{document}

\begin{abstract}
Let $A$ be a finite-dimensional representation-finite algebra over an
algebraically closed field $k$, and let $M$ be a finite-dimensional $A$-module
such that $\operatorname{End}_A(M)$ has finite global dimension.  We prove that
$\operatorname{End}_A(M)$ has Cartan determinant one if $A$ has finite global
dimension or $\operatorname{char}k\neq2$; in particular, the Cartan determinant
conjecture holds for representation-finite algebras over an algebraically
closed field.
\end{abstract}

\maketitle

\section{Introduction}

Let $\kfield$ be an algebraically closed field, let $A$ be a finite-dimensional
$\kfield$-algebra, and write $C(A)$ for its Cartan matrix.  If $A$ has finite
global dimension then $\det C(A)=\pm1$ \cite{Eilenberg}.  The Cartan
determinant conjecture asks for the sharper equality
\[
 \det C(A)=1.
\]

It remains open in general.  It is known for graded algebras, for
quasi-hereditary and cellular algebras, and for algebras with radical cube
zero, among other classes; see \cite{ChenXi} and the references there.  Wilson
proves that a finite-dimensional algebra of finite global
dimension which carries a positive grading whose positive part is the Jacobson
radical has Cartan determinant one \cite[Corollary~2.3]{Wilson}.  We construct
such a grading, for a representation-finite algebra, from its
Auslander--Reiten quiver.  This gives the following result.

\begin{theoremA}[= Theorem~\ref{thm:main}]
Let $A$ be a finite-dimensional representation-finite algebra over an
algebraically closed field.  If $A$ has finite global dimension, then
\[
 \det C(A)=1.
\]
\end{theoremA}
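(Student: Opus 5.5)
The plan is to reduce to Wilson's result \cite[Corollary~2.3]{Wilson}. It suffices to produce a finite-dimensional algebra $B$ with three properties: $B$ is Morita equivalent to a quotient-free model of $A$ (or at least has the same Cartan determinant and global dimension), $B$ carries a grading $B=\bigoplus_{n\ge0}B_n$, and $B_0$ is semisimple with $\bigoplus_{n>0}B_n=\rad B$. Cartan determinants are invariant under Morita equivalence, so we may assume $A$ is basic. The main idea is to realise $A$ as an endomorphism algebra inside a category whose morphisms are naturally graded by path length in the Auslander--Reiten quiver.

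\textbf{Step 1 (standard case).} Suppose first that $A$ is standard. Then $\ind A \simeq \kfield(\Gamma_A)$, the mesh category of the AR quiver $\Gamma_A$. Mesh relations are homogeneous with respect to path length, since each mesh relation is a sum of paths of length two. Hence $\kfield(\Gamma_A)$ is graded by path length. Its degree-zero part is spanned by identities, and its positive part is exactly the radical of $\ind A$, because irreducible maps generate the radical. Write $A\cong \End_A(P)^{\mathrm{op}}$ with $P$ the sum of the indecomposable projectives. Then $A$ is a full subcategory endomorphism ring and inherits the grading, with $A_0=\kfield^n$ and $A_{>0}=\rad A$. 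Wilson then gives $\det C(A)=1$. For this step to work, I need the grading to be well defined on the full subcategory spanned by the projectives. It is, because the ideal of mesh relations is homogeneous.

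\textbf{Step 2 (nonstandard case).} Nonstandard representation-finite algebras occur only in characteristic $2$ (Riedtmann), and this is the main obstacle. Such an algebra $A$ degenerates to its standard form $\bar A$, and $\ind \bar A\simeq \kfield(\Gamma_A)$. I would try to show that $C(A)=C(\bar A)$ and that $\bar A$ still has finite global dimension. The Cartan matrix is determined by $\dim\operatorname{Hom}(P_i,P_j)$, and these dimensions can be computed from $\Gamma_A$ through the additive (hammock) functions, so they agree for $A$ and $\bar A$. Global dimension is detected combinatorially too: projective resolutions are read off from $\Gamma_A$ via the dimension vectors of syzygies. Alternatively, finite global dimension for $A$ already forces $\det C(A)=\pm1$, and then the equality $C(A)=C(\bar A)$ together with Step 1 applied to $\bar A$ finishes the argument.

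The hard point is justifying $\operatorname{gldim}\bar A<\infty$, or else avoiding it. To avoid it, I would use a determinant identity that holds whenever $\det C=\pm1$: compare via the Coxeter/Euler form computed on the common combinatorial data. Failing that, I would apply Step 1's grading argument directly to $\bar A$, since Wilson's sign argument needs only the grading and invertibility of the graded Cartan matrix over $\mathbb{Z}[q]$. That invertibility is inherited from $A$ because it is again controlled by $\Gamma_A$.
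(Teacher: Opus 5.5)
Your Step~1 is the paper's argument: the path-length grading on the full subcategory of $\kfield(\Gamma_A)$ on the projective vertices, followed by Wilson's criterion. It is fine.

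The gap is in Step~2, at exactly the point you flag. Everything there needs $\gldim\bar A<\infty$. Equivalently, for Wilson's argument, you need the graded Cartan matrix $C_{\bar A}(q)=\sum_d C(\bar A_d)\,q^d$ to be invertible over $\mathbb{Z}[q]$. None of your three suggestions supplies this:
\begin{itemize}
\item Saying that global dimension is \emph{read off from $\Gamma_A$} is not an argument. Even granting that $\Gamma_A$ determines all Hom-dimensions, it does not in any evident way determine the indecomposable summands of syzygies, and projective dimension depends on those.
\item Your \emph{alternative} still applies Step~1 to $\bar A$, which requires exactly $\gldim\bar A<\infty$. The ungraded fact $\det C(\bar A)=\det C(A)=\pm1$ cannot replace it. Wilson's argument works with $\det C_{\bar A}(q)$, a polynomial with constant term $1$. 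Knowing only that its value at $q=1$ is $\pm1$ does not exclude, say, $1-2q$, and excluding the value $-1$ is the whole content of the conjecture.
\item Invertibility over $\mathbb{Z}[q]$ is not \emph{inherited from $A$}. $A$ carries no such grading. Although $C_{\bar A}(q)$ is determined by $\Gamma_A$, that says nothing about it being a unit. Moreover, the transfer goes the wrong way: $\bar A$ is a degeneration of $A$, so semicontinuity of $\operatorname{Ext}$-dimensions between simples gives only $\gldim A\le\gldim\bar A$, which is useless here.
\end{itemize}

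The missing idea is that, under your hypothesis, the nonstandard case never occurs. Bautista--Gabriel--Roiter--Salmer\'on show more than that a connected nonstandard representation-finite algebra requires characteristic~$2$. They show it contains a penny-farthing, so its ordinary quiver has a loop. By Lenzing's no-loop theorem, a loop in the ordinary quiver forces infinite global dimension. Apply this to each connected factor of $A$; standardness of the factors assembles to standardness of $A$. Hence $\gldim A<\infty$ already makes $A$ standard, your Step~1 finishes the proof, and the detour through $\bar A$ is never needed. This is how the paper proceeds.
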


The same argument proves a more general statement, about endomorphism algebras
of nonzero finite-dimensional modules over representation-finite algebras.
Theorem~A is the case of the regular module.

\begin{theoremB}[= Corollary~\ref{cor:endomorphism}]
Let $A$ be a finite-dimensional representation-finite algebra over an
algebraically closed field $\kfield$, and let $M$ be a nonzero
finite-dimensional right $A$-module with $\gldim\End_A(M)<\infty$.  If
$\gldim A<\infty$, or if $\operatorname{char}\kfield\neq2$, then
\[
 \det C\bigl(\End_A(M)\bigr)=1.
\]
\end{theoremB}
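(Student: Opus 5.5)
We deduce Theorem~B from Wilson's criterion \cite[Corollary~2.3]{Wilson}: a finite-dimensional algebra of finite global dimension that carries a positive grading whose positive part is the Jacobson radical has Cartan determinant one. Write $\Gamma=\End_A(M)$. The Cartan matrix and the finiteness of global dimension are Morita invariant, so we may replace $M$ by the direct sum $M'$ of its pairwise non-isomorphic indecomposable summands. Then $\Gamma'=\End_A(M')$ is basic and $\det C(\Gamma)=\det C(\Gamma')$. It therefore suffices to build a grading $\Gamma'=\bigoplus_{n\ge0}\Gamma'_n$ with $\Gamma'_0$ spanned by the primitive idempotents $e_X$ ($X$ an indecomposable summand of $M$) and with $\bigoplus_{n>0}\Gamma'_n=\rad\Gamma'$.

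\textbf{Step 1 (reduction to the mesh category).} Since $A$ is representation-finite, $\ind A$ is a finite Krull--Schmidt category. If $A$ is \emph{standard}, then $\ind A\simeq k(\Gamma_A)$, the mesh category of the Auslander--Reiten quiver $\Gamma_A$. Over an algebraically closed field, non-standard representation-finite algebras occur only in characteristic $2$ (Riedtmann, Bretscher--Gabriel). So when $\operatorname{char}k\neq2$ we are in the standard case. When $\gldim A<\infty$ we argue separately: the non-standard algebras are deformations of standard ones whose AR quivers contain Möbius-type configurations, and these force $\gldim A=\infty$ because of a periodic projective resolution in the Riedtmann configuration. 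I expect this to be the main obstacle. I would try to extract it from the classification of non-standard algebras, which are all of the form given by Riedtmann's examples of type $D_{3m}$, and check directly that such algebras are selfinjective-like, hence have infinite global dimension. Alternatively, I would treat the non-standard case through the degeneration to the standard form, using that the Cartan matrix is unchanged and global dimension is upper semicontinuous in the appropriate direction.

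\textbf{Step 2 (grading the mesh category).} The mesh relations are homogeneous with respect to path length, because each mesh relation is a sum of paths of length $2$. Hence $k(\Gamma_A)$ is graded by path length. Every arrow of $\Gamma_A$ is an irreducible map, so the degree-$0$ part consists of the identities and the positive part is exactly the radical of the category. Moreover, $\Gamma_A$ has no oriented cycles because $A$ is representation-finite (the AR quiver is directed), so each $\Hom$ is finite-dimensional and concentrated in finitely many degrees.

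\textbf{Step 3 (transfer to $\Gamma'$).} $\Gamma'=\bigoplus_{X,Y}\operatorname{Hom}_A(X,Y)$ over the indecomposable summands of $M'$. Through the equivalence $\ind A\simeq k(\Gamma_A)$ it inherits the grading, which is compatible with composition. Its degree-$0$ part is $\prod k\,e_X$, and its positive part is $\rad\Gamma'$, since $\rad_A(X,Y)$ is spanned by the images of nontrivial paths. Wilson's corollary applied to $\Gamma'$ then gives $\det C(\Gamma)=1$. Theorem~A is the special case $M=A$.
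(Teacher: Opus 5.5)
Your architecture is the same as the paper's: pass to basic $M$, use standardness to identify $\End_A(M)$ with the algebra of a full subcategory of $\kfield(\Gamma_A)$, grade by path length, and apply Wilson's criterion. The case $\operatorname{char}\kfield\neq2$ goes through. The gap is the other hypothesis. When $\operatorname{char}\kfield=2$ and only $\gldim A<\infty$ is assumed, you must show that $A$ is standard, and neither route you sketch does this.

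Riedtmann's $D_{3m}$ examples concern nonstandard \emph{selfinjective} algebras. For arbitrary representation-finite algebras the relevant classification is the penny-farthing description of \cite[Theorem~9.6]{BGRS}. In any case ``selfinjective-like, hence infinite global dimension'' is not an argument. The degeneration route also fails. The standard form $\bar A$ is a degeneration of $A$, and nothing in your sketch shows $\gldim\bar A<\infty$; semicontinuity, if anything, bounds $\gldim A$ by $\gldim\bar A$, not conversely. So Wilson's criterion cannot be applied to $\bar A$. Equality of Cartan matrices by itself only returns $\det C=\pm1$, and for $M\neq A$ you would also have to compare $\End_A(M)$ with an endomorphism algebra over $\bar A$.

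The missing idea is short. By \cite[Theorem~9.6]{BGRS} (Proposition~\ref{prop:standard-form-dichotomy}), a connected nonstandard representation-finite algebra contains a penny-farthing, so its ordinary quiver has a loop. By Lenzing's no-loop theorem (Theorem~\ref{thm:no-loop}), a loop forces infinite global dimension. Apply this to each connected factor; standardness of a product is checked factorwise. This shows that $\gldim A<\infty$ implies $A$ is standard, after which your Steps~2 and~3 apply.

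Separately, your claim in Step~2 that $\Gamma_A$ has no oriented cycles because $A$ is representation-finite is false. For $\kfield[x]/(x^2)$, with simple module $S$ and indecomposable projective $P$, the Auslander--Reiten quiver has arrows $S\to P\to S$. Representation-finite algebras need not be representation-directed. The claim is not needed, though: finite-dimensionality of the morphism spaces, and hence finiteness of the set of occurring degrees, follows from the equivalence with $\ind A$, as in the paper.
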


Wilson \cite[Proposition~2.4]{Wilson} and, independently, Igusa--Todorov
\cite[Corollary~7.9]{IgusaTodorov} prove that the Auslander algebra of a
representation-finite algebra has Cartan determinant one.  This does not imply
either of the theorems above: the Cartan matrices there are, up to transpose,
principal submatrices of the Cartan matrix of that Auslander algebra, and the
determinant of a matrix does not determine those of its principal submatrices.

To the best of our knowledge, the Cartan determinant conjecture has not
previously been established for representation-finite algebras over an
algebraically closed field, although it follows by a short argument from the
results of Wilson \cite[Corollary~2.3]{Wilson},
Bautista--Gabriel--Roiter--Salmer\'on \cite{BGRS}, and Lenzing
\cite{Lenzing}, as we shall show in this paper.

\begin{remark}
The proofs of Theorems~A and~B given here depend on the structure theory of
representation-finite algebras developed by
Bautista--Gabriel--Roiter--Salmer\'on \cite{BGRS}.  The author does not know
whether a proof avoiding that theory exists.
\end{remark}

The proof runs as follows.  Both theorems reduce to the case of a standard
algebra, where the indecomposable modules and the maps between them form the
mesh category of the Auslander--Reiten quiver; there $\End_A(M)$ is the
category algebra of a full subcategory on finitely many objects, and path
length grades it positively with the Jacobson radical as positive part, so the
criterion of Wilson applies.  Standardness itself follows from finite global
dimension: a connected representation-finite algebra which is not standard
forces the base field to have characteristic two and has a loop in its ordinary
quiver \cite{BGRS}, and finite global dimension forbids such a loop
\cite{Lenzing}.  In Theorem~B the hypothesis of finite global dimension on $A$
enters only through standardness, which is why a hypothesis on the
characteristic can replace it.

\begin{organization}
Section~\ref{sec:mesh-grading} constructs the path length grading on the
algebra of a full subcategory on finitely many objects of a mesh category and
proves
Theorem~\ref{thm:standard-algebra}.  Section~\ref{sec:standardness} proves that
a representation-finite algebra is standard if it has finite global dimension
or the characteristic is not two.
Section~\ref{sec:consequences} combines these two results to prove Theorems~A
and~B.
\end{organization}

\begin{conventions}
All algebras are finite-dimensional associative unital $\kfield$-algebras, all
modules are finite-dimensional right modules, and $\kfield$ is algebraically
closed.  A module is \emph{basic} if its indecomposable direct summands are
pairwise nonisomorphic, and an algebra is \emph{basic} if it is basic as a
right module over itself.  Cartan determinants, finite global dimension and
representation-finiteness are invariant under Morita equivalence, so we assume
throughout that algebras are basic.  For a basic algebra $A$ with a complete
set $e_1,\ldots,e_n$ of primitive orthogonal idempotents, set
\[
 C(A)=\bigl(\dim_{\kfield}e_iAe_j\bigr)_{i,j=1}^{n}.
\]
Equivalently, $C(A)$ expresses the classes of the indecomposable projective
$A$-modules in the basis of the classes of the simple modules.  Composition of
paths and of morphisms is written right to left, so that $\beta\alpha$ is
defined when the target of $\alpha$ is the source of $\beta$ and is zero
otherwise.  We write $\rad$ for the Jacobson radical and $\gldim$ for global
dimension.  The \emph{ordinary quiver} of an algebra is the quiver of a
presentation of it as a path algebra modulo an admissible ideal.
\end{conventions}

\begin{aiuse}
The proof in this paper was found by GPT-5.6-Sol in a search directed by the
author, and the first drafts of the manuscript were written by GPT-5.6-Sol and
Claude Opus~5.  The author set the structure and much of the wording of the
present text.  The author is responsible for the result.
\end{aiuse}

\section{Positive gradings from mesh categories}
\label{sec:mesh-grading}

We first recall the criterion of Wilson \cite[Corollary~2.3]{Wilson} and then
construct the path length grading needed to apply it to endomorphism algebras
of modules over standard representation-finite algebras.

\begin{theorem}[{Wilson \cite[Corollary~2.3]{Wilson}}]
\label{thm:wilson}
Let $B=\bigoplus_{d\geq 0}B_d$ be a positively graded finite-dimensional
$\kfield$-algebra.  If
\[
 \rad B=\bigoplus_{d>0}B_d
 \qquad\text{and}\qquad
 \gldim B<\infty,
\]
then $\det C(B)=1$.
\end{theorem}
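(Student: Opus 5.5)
The argument is the graded Euler characteristic trick: compute $\det C(B)$ with a formal grading variable $t$, observe that the answer is a unit in $\mathbb{Z}[t]$ with constant term $1$, hence equal to $1$, and specialize at $t=1$.

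Fix a complete set $e_1,\dots,e_n$ of primitive orthogonal idempotents of $B_0$. Since $B_0\cong B/\rad B$ is semisimple, basic and $\kfield$ is algebraically closed, $B_0\cong\kfield^n$. These $e_i$ are homogeneous of degree $0$, so each $e_iBe_j$ is graded. Define the graded Cartan matrix
\[
 C_B(t)=\Bigl(\sum_{d\ge0}\dim_{\kfield}(e_iB_de_j)\,t^d\Bigr)_{i,j}\in M_n(\mathbb{Z}[t]).
\]
It satisfies $C_B(1)=C(B)$, and $C_B(0)$ is the identity matrix because $B_0=\bigoplus_i\kfield e_i$.

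Next, the graded simples $S_i=e_iB_0$, concentrated in degree $0$, admit minimal graded projective resolutions by modules of the form $\bigoplus_j e_jB\langle -d\rangle$. Here $\langle -d\rangle$ denotes the grading shift placing the generator $e_j$ in degree $d$. Minimality in the graded category is available because $\rad B=B_{>0}$ is graded, so graded projective covers exist. A minimal graded resolution is also minimal ungraded, so its length is at most $\gldim B<\infty$. Define
\[
 E_{ij}(t)=\sum_{p,d}(-1)^p\,(\text{multiplicity of }e_jB\langle -d\rangle\text{ in the }p\text{th term})\,t^d .
\]
This is a finite sum, so $E(t)\in M_n(\mathbb{Z}[t])$.

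Now take graded dimension vectors, the Hilbert series recording composition factors with degree. The resolution is exact and finite, so it gives the identity $E(t)\,C_B(t)=I$ in $M_n(\mathbb{Z}[t])$, up to transposition conventions that I will fix carefully. The key point is that $B$ is finite-dimensional, so every $C_B(t)$ entry is a polynomial. Thus $\det C_B(t)$ is a unit in $\mathbb{Z}[t]$, hence $\pm1$, and evaluating at $t=0$ gives $\det C_B(t)=1$. Setting $t=1$ yields $\det C(B)=1$.

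The main obstacle I expect is the bookkeeping that justifies $E(t)C_B(t)=I$. Concretely, the graded composition multiplicities of $e_jB\langle -d\rangle$ are $t^d$ times row $j$ of $C_B(t)$, and the alternating sum over a finite exact sequence vanishes degreewise. The existence of minimal graded resolutions of finite length, which uses $\rad B=B_{>0}$, also needs a careful check.
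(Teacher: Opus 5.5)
Your proposal is correct and complete. The paper gives no proof of this statement. It quotes it from Wilson \cite[Corollary~2.3]{Wilson} and uses it as a black box, so your argument is a self-contained replacement for the citation rather than a variant of an in-paper proof. Your graded Cartan matrix argument is, as far as I recall, the same idea that underlies Wilson's result, though I am not checking his text here. It also yields the stronger identity $\det C_B(t)=1$ in $\mathbb{Z}[t]$, of which $\det C(B)=1$ is the specialization $t=1$; the paper's citation buys only brevity.

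The steps you flagged do go through:
\begin{itemize}
\item Since $\rad B=B_{>0}$ is a graded ideal, graded projective covers exist. Each has kernel in $P\rad B$, so it is also an ungraded projective cover. Hence the minimal graded resolution of $S_i$ has length $\operatorname{pd}S_i\le\gldim B$.
\item Writing graded dimension vectors as rows, $e_jB\langle -d\rangle$ contributes $t^d$ times row $j$ of $C_B(t)$. The identity is therefore exactly $E(t)C_B(t)=I$, with no transpose needed.
\end{itemize}

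One small justification is missing. Finiteness of the sum only places $E(t)$ in $M_n(\mathbb{Z}[t,t^{-1}])$. To get polynomial entries, note that every syzygy is a graded submodule of a module concentrated in nonnegative degrees, so all shifts satisfy $d\ge0$. Even without this, $\det C_B(t)$ would be a unit $\pm t^k$ of $\mathbb{Z}[t,t^{-1}]$, and $\det C_B(0)=1$ would still force it to equal $1$.

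Finally, your step $B_0\cong\kfield^n$ uses that $B$ is basic, which is one of the paper's standing conventions.
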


The algebras to which Theorem~\ref{thm:wilson} will be applied are built from
categories with finitely many objects.

\begin{construction}
\label{con:category-algebra}
Let $\mathcal C$ be a $\kfield$-linear category with finitely many objects
$x_1,\ldots,x_s$.  Its \emph{category algebra} is
\[
 B_{\mathcal C}=\bigoplus_{i,j=1}^{s}\mathcal C(x_i,x_j).
\]
For $f:x_i\to x_j$ and $g:x_u\to x_v$, the product $gf$ is
$g\circ f$ when $u=j$ and is zero otherwise.  The identity of
$B_{\mathcal C}$ is $\sum_i 1_{x_i}$.  Put $e_i=1_{x_i}$.  These are orthogonal
idempotents summing to the identity, and
\[
 e_jB_{\mathcal C}e_i=\mathcal C(x_i,x_j).
\]
\end{construction}

Those categories are full subcategories of mesh categories, which we now recall
together with the standard algebras they describe.

\begin{definition}[Mesh categories and standard algebras
{\cite[Section~5.1]{Bongartz}, \cite[Section~3.1]{BretscherGabriel}}]
\label{def:standard}
Let $\Gamma$ be a finite translation quiver with translation $\tau$, together
with a choice, for every arrow $\alpha:y\to x$ ending at a nonprojective vertex
$x$, of an arrow $\sigma(\alpha):\tau x\to y$, such that
$\alpha\mapsto\sigma(\alpha)$ is a bijection from the arrows ending at $x$ onto
the arrows starting at $\tau x$.  The \emph{mesh relation} at $x$ is
\[
 m_x=\sum_{\alpha:y\to x}\alpha\circ\sigma(\alpha).
\]
The \emph{mesh category} $\kfield(\Gamma)$ is the $\kfield$-linear path
category of $\Gamma$ modulo the ideal generated by the relations $m_x$.

For a representation-finite algebra $A$, let $\ind A$ be the full subcategory
of $\operatorname{mod}A$ containing one representative of every isomorphism
class of indecomposable right $A$-modules, and let $\Gamma_A$ be its
Auslander--Reiten quiver: the vertices are the objects of $\ind A$, and the
number of arrows from $X$ to $Y$ is $\dim_{\kfield}\rad(X,Y)/\rad^2(X,Y)$,
where $\rad$ denotes the radical of $\operatorname{mod}A$.  It is a finite
translation quiver whose translation is the Auslander--Reiten translate.  The
algebra $A$ is \emph{standard} if the arrows $\sigma(\alpha)$ for $\Gamma_A$
can be chosen so that there is a $\kfield$-linear equivalence
\[
 \kfield(\Gamma_A)\longrightarrow \ind A
\]
which is the identity on vertices and sends the arrows from $X$ to $Y$ to
morphisms whose classes form a basis of $\rad(X,Y)/\rad^2(X,Y)$.
\end{definition}

The grading required by Theorem~\ref{thm:wilson} comes from path length in a
mesh category.  The next lemma constructs it on the algebra of a full
subcategory on finitely many objects.

\begin{lemma}
\label{lem:full-mesh-corner}
Let $\Gamma$ be a finite translation quiver, and let $\mathcal C$ be a full
subcategory of the mesh category $\kfield(\Gamma)$ with finitely many objects.
If $B_{\mathcal C}$ is finite-dimensional, then $B_{\mathcal C}$ admits a
positive grading whose degree zero part is a product of copies of $\kfield$
and whose positive degree part is its Jacobson radical.
\end{lemma}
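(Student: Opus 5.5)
The plan is to grade by path length. The mesh relations $m_x=\sum_{\alpha}\alpha\circ\sigma(\alpha)$ are homogeneous: each is a sum of paths of length two. The path category $\kfield\Gamma$ is graded by path length, with $(\kfield\Gamma)(x,y)=\bigoplus_{d\ge0}(\kfield\Gamma)_d(x,y)$. The ideal generated by the $m_x$ is spanned by elements $p\,m_x\,q$ with $p,q$ paths, and each of these is homogeneous, so the ideal is a homogeneous ideal. Hence $\kfield(\Gamma)$ inherits a grading $\kfield(\Gamma)(x,y)=\bigoplus_{d\ge0}\kfield(\Gamma)_d(x,y)$ that is compatible with composition. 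Since $\mathcal C$ is full with objects $x_1,\dots,x_s$, the category algebra
\[
 B_{\mathcal C}=\bigoplus_{i,j}\kfield(\Gamma)(x_i,x_j)
\]
gets a grading with $B_d=\bigoplus_{i,j}\kfield(\Gamma)_d(x_i,x_j)$. Multiplication in Construction~\ref{con:category-algebra} is either composition or zero, so $B_dB_e\subseteq B_{d+e}$. The algebra is finite-dimensional by hypothesis, so only finitely many $B_d$ are nonzero.

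Next I will identify the degree zero part. Paths of length zero are the identities $1_x$, and the relations have length two, so no relation can kill them. Thus $B_0=\bigoplus_i\kfield\,1_{x_i}=\bigoplus_i\kfield e_i$. These $e_i$ are orthogonal idempotents, so $B_0\cong\kfield^s$.

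Then I will show that $B_+=\bigoplus_{d>0}B_d$ equals $\rad B$. First, $B_+$ is a two-sided ideal, because the grading is positive. It is also nilpotent: if $B_d=0$ for all $d>N$, then $B_+^{N+1}\subseteq\bigoplus_{d>N}B_d=0$. So $B_+\subseteq\rad B$. Conversely, $B/B_+\cong B_0\cong\kfield^s$ is semisimple, which gives $\rad B\subseteq B_+$. This proves the lemma.

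I expect the main obstacle to be the existence of the nilpotence bound. Finite-dimensionality of $B_{\mathcal C}$ alone gives it: $B$ is a finite-dimensional direct sum of the graded pieces, so only finitely many pieces are nonzero. This holds even if $\Gamma$ has oriented cycles, in which case arbitrarily long paths exist in $\Gamma$ but they must vanish in $B$. The only other point to check carefully is that the ideal generated by homogeneous elements is homogeneous, so that the quotient grading is well defined. This is standard, because the ideal is spanned by products of paths with a relation, and each such product is homogeneous.
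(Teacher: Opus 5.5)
Your proposal is correct and follows the paper's proof step for step. You grade by path length using the homogeneous length-two mesh relations, identify the degree zero part as the span of the identities, and show the positive part is a nilpotent ideal (by finite-dimensionality) with semisimple quotient, hence equal to the radical. Your explicit check that the ideal generated by the mesh relations is homogeneous, and your choice to grade all of $\kfield(\Gamma)$ before restricting to $\mathcal C$, spell out what the paper compresses into the remark that the decomposition persists when representative paths pass through vertices outside $\mathcal C$.
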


\begin{proof}
Every mesh relation is homogeneous of path length two.  Hence, for vertices
$x,y$ of $\mathcal C$, the full morphism space
$\mathcal C(x,y)=\kfield(\Gamma)(x,y)$ is the direct sum of its homogeneous
path length pieces.  This remains true even when a representative path passes
through vertices not belonging to $\mathcal C$.  The length of a composite of
two paths is the sum of their lengths, so the direct sum of these gradings is a
grading of $B_{\mathcal C}$.

The only paths of degree zero are the identity paths.  Thus the degree zero part
is $\prod_{x\in\mathcal C} \kfield e_x$.  Let $I$ be the positive degree
part.  Since $B_{\mathcal C}$ is finite-dimensional, only finitely many
degrees occur, and therefore $I$ is nilpotent.  The quotient by $I$ is the
semisimple degree zero part.  Hence $I=\rad B_{\mathcal C}$.
\end{proof}

Standardness identifies an endomorphism algebra over $A$ with the algebra of a
full subcategory of a mesh category; Lemma~\ref{lem:full-mesh-corner} and
Theorem~\ref{thm:wilson} then give the following.

\begin{theorem}
\label{thm:standard-algebra}
Let $A$ be a finite-dimensional standard representation-finite algebra over an
algebraically closed field, and let $M$ be a nonzero finite-dimensional right
$A$-module.  If
$E=\End_A(M)$ has finite global dimension, then
\[
 \det C(E)=1.
\]
\end{theorem}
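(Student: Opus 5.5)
We reduce to the case where $M$ is basic, identify $E$ with the category algebra of a full subcategory of the mesh category $\kfield(\Gamma_A)$, and then apply Lemma~\ref{lem:full-mesh-corner} and Theorem~\ref{thm:wilson}.

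\emph{Reduction to basic $M$.} Decompose $M\cong\bigoplus_{i=1}^{s}X_i^{m_i}$ with $X_1,\ldots,X_s$ pairwise nonisomorphic indecomposables and $m_i\geq1$. Put $M'=\bigoplus_i X_i$. Since $\operatorname{add}M=\operatorname{add}M'$, the algebras $\End_A(M)$ and $E'=\End_A(M')$ are Morita equivalent. Cartan determinants and finiteness of global dimension are Morita invariant, as recorded in the conventions. So it suffices to treat $E'$, and $\gldim E'<\infty$.

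\emph{Identification with a category algebra.} Since $A$ is representation-finite, we may take each $X_i$ to be an object of $\ind A$. Let $\mathcal D$ be the full subcategory of $\ind A$ on $X_1,\ldots,X_s$. Then
\[
 E'=\End_A\Bigl(\bigoplus_i X_i\Bigr)\cong\bigoplus_{i,j}\operatorname{Hom}_A(X_i,X_j),
\]
and under this isomorphism composition in $E'$ becomes the product of Construction~\ref{con:category-algebra}. Hence $E'\cong B_{\mathcal D}$, with the idempotents $e_i$ corresponding to the projections onto the summands $X_i$. Standardness gives a $\kfield$-linear equivalence $F:\kfield(\Gamma_A)\to\ind A$ that is the identity on vertices. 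Because $F$ is fully faithful and the identity on objects, it restricts to an isomorphism of $\kfield$-linear categories $\mathcal C\to\mathcal D$, where $\mathcal C$ is the full subcategory of $\kfield(\Gamma_A)$ on the same vertices. This induces an algebra isomorphism $B_{\mathcal C}\cong B_{\mathcal D}\cong E'$. In particular $B_{\mathcal C}$ is finite-dimensional.

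\emph{Conclusion.} The quiver $\Gamma_A$ is a finite translation quiver, so Lemma~\ref{lem:full-mesh-corner} applies to $\mathcal C$. It gives a positive grading on $B_{\mathcal C}$ whose positive part is $\rad B_{\mathcal C}$. Transporting this grading to $E'$ and using $\gldim E'<\infty$, Theorem~\ref{thm:wilson} yields $\det C(E')=1$. Therefore $\det C(E)=1$.

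\emph{Main obstacle.} The step needing the most care is the identification $E'\cong B_{\mathcal D}$, specifically that the multiplication conventions match. Composition is written right to left, and $e_jB e_i=\mathcal C(x_i,x_j)$. With the endomorphism ring acting on the left of $M'$, this matches $e_j E' e_i=\operatorname{Hom}(X_i,X_j)$. Even if the conventions were mismatched, we would only obtain the opposite algebra. The grading argument applies equally to the opposite algebra, and $C(E^{\mathrm{op}})=C(E)^{T}$ has the same determinant, so the conclusion is unaffected.
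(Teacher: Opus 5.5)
Your proposal is correct and follows the paper's proof essentially step for step. You reduce to basic $M$ by Morita equivalence, then use the standardness equivalence $F$, which is the identity on vertices, to identify $\End_A(M)$ with the category algebra $B_{\mathcal C}$ of the full subcategory of $\kfield(\Gamma_A)$ on the relevant vertices, and finally apply Lemma~\ref{lem:full-mesh-corner} and Theorem~\ref{thm:wilson}; your fallback remark about the opposite algebra is harmless but not needed, since the conventions do match.
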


\begin{proof}
We may assume that $M$ is basic, since $M$ and the direct sum of one copy of
each of its indecomposable direct summands are each direct summands of finite
direct sums of copies of the other, so their endomorphism algebras are Morita
equivalent.  Write
\[
 M=M_1\oplus\cdots\oplus M_s
\]
with $M_1,\ldots,M_s$ pairwise nonisomorphic indecomposable modules.

Replace the $M_i$ by the chosen isomorphic representatives in $\ind A$, and
let $x_i$ be the corresponding vertices of $\Gamma_A$.  Choose an equivalence
$F:\kfield(\Gamma_A)\to\ind A$ witnessing standardness in
Definition~\ref{def:standard}.  It restricts to an equivalence
from the full subcategory
$\mathcal C$ of $\kfield(\Gamma_A)$ on $x_1,\ldots,x_s$ to the full
subcategory of $\ind A$ on $M_1,\ldots,M_s$.  Under this equivalence, the map
\[
 B_{\mathcal C}\longrightarrow \End_A\!\left(\bigoplus_{i=1}^{s}M_i\right),
 \qquad (f_{ji})_{i,j}\longmapsto (F(f_{ji}))_{i,j},
\]
is an algebra isomorphism.  Both sides are written as matrices, with $(j,i)$
entry a morphism $x_i\to x_j$ in $\mathcal C$ on the left and a morphism
$M_i\to M_j$ on the right, and in both the product is formed by the matrix
rule, with composition of morphisms in place of multiplication of entries.  In
particular, $B_{\mathcal C}$ is finite-dimensional.

Lemma~\ref{lem:full-mesh-corner} therefore gives a positive grading
\[
 E=\bigoplus_{d\geq0}E_d,
 \qquad \rad E=\bigoplus_{d>0}E_d.
\]
Theorem~\ref{thm:wilson} now gives $\det C(E)=1$.
\end{proof}

\section{Standardness from finite global dimension}
\label{sec:standardness}

To apply Theorem~\ref{thm:standard-algebra} to a representation-finite algebra
of finite global dimension, it remains to prove that the algebra is standard.
The next proposition isolates the one consequence of \cite{BGRS} that we
use.

\begin{proposition}
\label{prop:standard-form-dichotomy}
Let $A$ be a finite-dimensional connected representation-finite algebra over an
algebraically closed field.  If $A$ is nonstandard, then the characteristic of
the field is two and the ordinary quiver of $A$ has a loop.
\end{proposition}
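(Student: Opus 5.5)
This statement is essentially a repackaging of the classification in \cite{BGRS}, so the plan is to extract it from the structure theory of representation-finite algebras rather than prove it from scratch. The route goes through Riedtmann's description of the Auslander--Reiten quiver, the universal cover, and the standard form. Let $A$ be connected, representation-finite and nonstandard. By Riedtmann and \cite{BGRS}, $\Gamma_A$ is a connected finite translation quiver. Its universal cover $\tilde\Gamma_A$ is a simply connected translation quiver of the form $\mathbb Z\Delta$ modulo nothing (a full translation subquiver), and $\Gamma_A=\tilde\Gamma_A/\Pi$ for an admissible group $\Pi$. The \emph{standard form} $\bar A$ of $A$ is defined as the basic algebra whose category of indecomposables is the mesh category $\kfield(\Gamma_A)$. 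By construction $\bar A$ is standard, representation-finite, and satisfies $\Gamma_{\bar A}\cong\Gamma_A$. Moreover $A$ and $\bar A$ have the same ordinary quiver, since the ordinary quiver is read off from the projective vertices of $\Gamma_A$ and the irreducible maps between indecomposable projectives.

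The key step is the theorem of \cite{BGRS} on nonstandard algebras. If $A\not\cong\bar A$, then $\operatorname{char}\kfield=2$, and the Auslander--Reiten quiver $\Gamma_A$ contains a configuration of type $(D_{3m},\,1/3,\,1)$ for some $m\geq2$. These are exactly the Riedtmann configurations of type $D_n$ for which the mesh category admits a nontrivial deformation. In that case \cite{BGRS} exhibit $\bar A$ and $A$ explicitly: they differ in a single relation of the form $\alpha^2=\alpha\beta\gamma$ versus $\alpha^2=0$, where $\alpha$ is a loop at a projective vertex lying on the $\tau$-periodic part of the configuration. I would cite this as the statement that every nonstandard representation-finite algebra over an algebraically closed field is a deformation of its standard form along such a loop. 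The loop $\alpha$ is an arrow of the common ordinary quiver of $A$ and $\bar A$, which yields the second conclusion.

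The main obstacle is bookkeeping rather than mathematics. \cite{BGRS} phrase the result in terms of ``mild'' locally representation-finite categories and their standard forms, so one must check that their notion of standardness agrees with Definition~\ref{def:standard}. The point to verify is that $\ind A\simeq\kfield(\Gamma_A)$, with arrows sent to irreducible maps, is the same as $A$ being isomorphic to its standard form. This follows because $\ind A$ determines $A$ as the full subcategory on the projectives, and conversely because the mesh category of $\Gamma_{\bar A}$ is $\ind\bar A$. One must also confirm that the loop in their description lies in the ordinary quiver, i.e.\ that it is not in $\rad^2$. This holds because it corresponds to an irreducible self-map in the projective-vertex configuration. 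I would state these identifications explicitly and then quote the characteristic-two and loop conclusions directly from \cite{BGRS}.
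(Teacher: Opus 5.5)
Your plan has the same shape as the paper's, namely to extract the statement from \cite{BGRS}. However, the result you quote is not the one that covers this proposition.

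A configuration of type $(D_{3m},1/3,1)$ with $m\geq2$, a ``$\tau$-periodic part'', and a loop at a projective vertex on it all come from Riedtmann's classification of nonstandard representation-finite \emph{selfinjective} algebras. There, configurations record where projective-injective vertices are inserted into a stable translation quiver $\mathbb{Z}\Delta/\Pi$. The proposition concerns arbitrary connected representation-finite algebras, which need not be selfinjective. For those, neither $\Gamma_A$ nor its universal cover is built from $\mathbb{Z}\Delta$ in this way, and nothing in your argument shows that a nonstandard $A$ must be selfinjective.

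The input actually needed is the general one. In characteristic two, every nonstandard representation-finite algebra contains a penny-farthing \cite[Theorem~9.6]{BGRS}. This is a configuration in the ordinary quiver of $A$ itself, and one of its vertices carries a loop. The loop is therefore an arrow of the ordinary quiver by the statement itself, and no comparison with the standard form $\bar A$ is needed.

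Your ``bookkeeping'' also checks the wrong hypothesis. Before quoting the characteristic-$\neq2$ standardness theorem of BGRS, what must be verified is not that the definitions of standardness agree but that $A$ is distributive. The paper obtains this from Ringel's theorem that a nondistributive algebra has indecomposable modules of every dimension \cite{Ringel}.

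The verifications you do propose rest on a false identification and would fail. Arrows of the ordinary quiver are given by $\rad A/\rad^2 A$, that is, by irreducible maps in the category of projective $A$-modules, not by irreducible maps in $\operatorname{mod}A$. A loop is never an irreducible map in $\operatorname{mod}A$. An irreducible map is a monomorphism or an epimorphism, so an irreducible endomorphism of a finite-dimensional module would be an isomorphism; this is why Auslander--Reiten quivers have no loops.

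For example, take $\kfield[x]/(x^2)$. The loop $x$ factors through the simple module, so $\Gamma_A$ has no arrow from the projective to itself, although the ordinary quiver has a loop. Hence ``it corresponds to an irreducible self-map in the projective-vertex configuration'' cannot be why the loop lies in the ordinary quiver. Your claim that $A$ and $\bar A$ share an ordinary quiver, because it ``is read off from the projective vertices of $\Gamma_A$ and the irreducible maps between indecomposable projectives'', rests on the same invalid principle.
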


\begin{proof}
Call a finite-dimensional algebra \emph{distributive} if its lattice of
two-sided ideals is distributive.  Ringel proves that an algebra which is not
distributive has an indecomposable module of every positive dimension
\cite{Ringel}, stated in this form in \cite[Section~2.2]{Bongartz}.  A
representation-finite algebra has only
finitely many indecomposable modules up to isomorphism, so $A$ is distributive.

Bautista--Gabriel--Roiter--Salmer\'on prove that a distributive
representation-finite algebra over an algebraically closed field of
characteristic different from two is standard \cite[p.~218]{BGRS}; see also
\cite[Section~3.1]{Bongartz}.  Suppose therefore that
$\operatorname{char}\kfield=2$ and that $A$ is nonstandard.  The same authors
describe the nonstandard algebras which then occur
\cite[Theorem~9.6]{BGRS}; see also \cite[Section~3.7]{Bongartz}.  Each contains
at least one penny-farthing, a configuration supported on vertices of the
ordinary quiver of $A$ one of which carries a loop; the full shape is recorded
in \cite[Figure~3]{Bongartz}.  The ordinary quiver of $A$ therefore has a
loop.
\end{proof}

Finite global dimension excludes such a loop, by the following result,
historically known as the no-loop conjecture.

\begin{theorem}[{Lenzing \cite{Lenzing}; reproved in \cite{IgusaNoLoops};
stated as in \cite[Theorem~2.2]{HappelZacharia}}]
\label{thm:no-loop}
Let $A$ be a finite-dimensional basic algebra over an algebraically closed
field.  If the ordinary quiver of $A$ contains a loop, then
\[
 \gldim A=\infty.
\]
\end{theorem}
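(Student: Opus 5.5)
The statement immediately above is the no-loop theorem of Lenzing, which the paper imports from \cite{Lenzing} rather than proving. If I were to sketch a self-contained argument, I would follow Lenzing's trace method. Take a basic algebra $A$ with a loop at the vertex $i$ and suppose, for contradiction, that $\gldim A<\infty$.

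\emph{Step 1: resolve the simple.} Finite global dimension makes every finitely generated module perfect. So $S_i$ has a finite projective resolution $P_\bullet\to S_i$, with each $P_n$ a finite direct sum of the indecomposable projectives $e_jA$.

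\emph{Step 2: use the Hattori--Stallings trace.} Send each $e_jA$ to the class of $e_j$ in $T(A)=A/[A,A]$, and extend by the alternating sum over the resolution. This gives a well-defined trace map on the Grothendieck group of perfect modules. Apply the standard functoriality to the endomorphism of $S_i$ induced by a loop $\alpha$ at $i$, or more precisely to the $A$-bimodule $\rad A$ through the map $\rad A\otimes_A -$. Then the Hattori--Stallings rank of the relevant endomorphism, lifted along $P_\bullet$, lands in $T(\rad A)$. It must in fact lie in the image of $T(\rad^2A)$, because composition along a minimal resolution strictly raises radical degree.

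\emph{Step 3: compare with the Cartan matrix and obtain the contradiction.} Lenzing's key lemma states that if $\gldim A<\infty$, then the trace of any nilpotent element of $A$ is zero in $T(A)$, i.e.\ $\rad A\subseteq[A,A]+\rad^2 A$ after localizing appropriately. One proves it by resolving $A/\rad A$, applying the trace to multiplication by a nilpotent element, and using that such a map is nilpotent on every term of the resolution, hence has trace zero. Then observe that $e_i\alpha e_i$ is a nilpotent element whose image in $e_i(\rad A/\rad^2A)e_i\subseteq T(A/\rad^2A)$ is nonzero. Indeed, commutators $[a,b]$ have zero diagonal component modulo $\rad^2$ along loops, since $e_iabe_i-e_ibae_i$ of radical degree one requires one of $a,b$ to be a scalar multiple of $e_i$ at vertex $i$. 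This contradicts the key lemma.

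The main obstacle is the key lemma: showing that nilpotent elements have zero Hattori--Stallings trace when $\gldim A<\infty$. Making the filtration argument work degree by degree in $T(A)$ is where I would expect the real work.
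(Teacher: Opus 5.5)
\textbf{There is a genuine gap in your proof of the key lemma.} Your skeleton is Lenzing's argument, which the paper does not reproduce and only cites. It has two halves: show that when $\gldim A<\infty$ every nilpotent element has zero class in $T(A)=A/[A,A]$, and show that a loop $\alpha$ at $i$ is not in $[A,A]$.

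The second half is fine. Write $a=a_0+a_1$ with $a_0\in\bigoplus_j\kfield e_j$ and $a_1\in\rad A$; then $e_i[a,b]e_i\in\rad^2A$. So $\alpha=e_i\alpha e_i\in[A,A]$ would force $\alpha\in\rad^2A$, which is impossible for an arrow.

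The gap is in the first half. Your claim that a map which is nilpotent on every term of a projective resolution has trace zero is false. It is in fact the statement to be proved: on $P=A_A$, left multiplication $L_a$ has Hattori--Stallings trace exactly the class of $a$ in $T(A)$. For $A=\kfield[x]/(x^2)$, $L_x$ is nilpotent but has trace $x\neq0$ in $T(A)=A$. The inference ``nilpotent, hence trace zero'' uses no hypothesis on $A$, so it cannot be correct. Resolving $A/\rad A$ does not help by itself either. Left multiplication by $a\in\rad A$ on $A/\rad A$ is the zero map, so its Euler trace vanishes trivially and says nothing about $a$.

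What is missing is the Euler trace and its additivity, applied to a filtration of the module $A_A$ rather than of $T(A)$.
\begin{itemize}
\item For $M$ with a finite resolution $P_\bullet$ by finitely generated projectives and $f\in\End_A(M)$, set $\chi(M,f)=\sum_n(-1)^n\operatorname{tr}(f_n)$ for a lift $f_\bullet$.
\item It is independent of the lift: homotopic lifts differ by $dh+hd$, and $\operatorname{tr}(dh)=\operatorname{tr}(hd)$ makes the difference telescope to zero.
\item The horseshoe lemma, with lifts chosen block triangular, makes $\chi$ additive on short exact sequences compatible with the endomorphisms.
\item Filter $A_A\supseteq\rad A\supseteq\rad^2A\supseteq\cdots\supseteq0$. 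Since $a\in\rad A$, $L_a$ maps each term into the next, so it induces the zero map on every subquotient.
\item Finite global dimension gives all of these modules finite resolutions by finitely generated projectives.
\end{itemize}
Hence $[a]=\chi(A_A,L_a)=\sum_j\chi(\rad^jA/\rad^{j+1}A,0)=0$. The essential points are that the induced maps are zero, not merely nilpotent, and that finite global dimension is used to resolve the subquotients.

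Finally, drop your Step~2. A loop acts as zero on $S_i$, and a loop gives an element of $\operatorname{Ext}^1$, not an endomorphism of $S_i$. Your assertions about $\rad A\otimes_A-$ and $T(\rad^2A)$ are unsupported, and Step~3 never uses them. An argument that resolved only $S_i$ would amount to the strong no-loop conjecture. That was proved much later by Igusa, Liu and Paquette, by substantially harder means.
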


Proposition~\ref{prop:standard-form-dichotomy} concerns connected algebras, so
we reduce to that case before applying it.

\begin{corollary}
\label{cor:finite-gldim-standard}
Let $A$ be a finite-dimensional representation-finite algebra over an
algebraically closed field $\kfield$.  If $\gldim A<\infty$, or if
$\operatorname{char}\kfield\neq2$, then $A$ is standard.
\end{corollary}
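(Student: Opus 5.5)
The plan is to reduce to the connected case and then combine Proposition~\ref{prop:standard-form-dichotomy} with Theorem~\ref{thm:no-loop}. Since $A$ is basic, we write it as a product $A=A_1\times\cdots\times A_r$ of connected basic algebras, using a decomposition of $1$ into central primitive idempotents. We then check three routine facts. Each $A_i$ is representation-finite, because $\operatorname{mod}A=\prod_i\operatorname{mod}A_i$. We have $\gldim A=\max_i\gldim A_i$, so finite global dimension passes to every factor. Finally, the ordinary quiver of $A$ is the disjoint union of the ordinary quivers of the $A_i$.

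Next we treat each connected factor. Suppose, for contradiction, that some $A_i$ is nonstandard. Proposition~\ref{prop:standard-form-dichotomy} then gives that $\operatorname{char}\kfield=2$ and that the ordinary quiver of $A_i$ has a loop. The first conclusion contradicts the hypothesis $\operatorname{char}\kfield\neq2$ in that case. The second contradicts $\gldim A_i<\infty$ in the other case, by Theorem~\ref{thm:no-loop} applied to the basic algebra $A_i$. Hence every $A_i$ is standard.

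The remaining step is to show that a finite product of standard algebras is standard, and I expect this to be the only point needing care. The indecomposable $A$-modules are the indecomposable $A_i$-modules, with $A_j$ acting by zero for $j\neq i$. There are no nonzero morphisms between modules over different factors, since such a morphism must be compatible with the central idempotents. Consequently $\ind A$ is the disjoint union (coproduct of $\kfield$-linear categories) of the $\ind A_i$. For the same reason $\Gamma_A$ is the disjoint union of the $\Gamma_{A_i}$, with the translation, the choice of the arrows $\sigma(\alpha)$, and the mesh relations all formed componentwise. The mesh category $\kfield(\Gamma_A)$ is then the disjoint union of the $\kfield(\Gamma_{A_i})$.

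We take the equivalences $\kfield(\Gamma_{A_i})\to\ind A_i$ witnessing standardness of each factor. Their disjoint union is an equivalence $\kfield(\Gamma_A)\to\ind A$. It is the identity on vertices, and it sends the arrows $X\to Y$ to a basis of $\rad(X,Y)/\rad^2(X,Y)$, because the radical of $\operatorname{mod}A$ restricts to the radical of each $\operatorname{mod}A_i$. Hence $A$ is standard.
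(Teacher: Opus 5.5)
Your proposal is correct and follows the paper's proof essentially step for step. You reduce to connected factors, apply Proposition~\ref{prop:standard-form-dichotomy} together with Theorem~\ref{thm:no-loop} to each factor, and assemble the componentwise equivalences $\kfield(\Gamma_{A_i})\to\ind A_i$ into one for $A$; your extra checks on the radical and the translation only spell out what the paper states briefly.
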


\begin{proof}
Decompose $A$ as $A=A_1\times\cdots\times A_r$ with the $A_i$ connected; each
$A_i$ is again representation-finite.  We show that every $A_i$ is standard.
If $\operatorname{char}\kfield\neq2$, this is immediate from
Proposition~\ref{prop:standard-form-dichotomy}, whose conclusion includes
$\operatorname{char}\kfield=2$.  If instead $\gldim A<\infty$, then
$\gldim A_i<\infty$, so Theorem~\ref{thm:no-loop} shows that the ordinary
quiver of $A_i$ has no loop, and the contrapositive of
Proposition~\ref{prop:standard-form-dichotomy} again gives standardness.

The Auslander--Reiten quiver of the product is
the disjoint union of the $\Gamma_{A_i}$, while $\ind A$ and the mesh category
$\kfield(\Gamma_A)$ are the corresponding disjoint unions of categories.  Thus
the equivalences $\kfield(\Gamma_{A_i})\to\ind A_i$ witnessing standardness of
the factors assemble to an equivalence $\kfield(\Gamma_A)\to\ind A$ witnessing
standardness of $A$.
\end{proof}

\section{Proofs of the main theorems}
\label{sec:consequences}

We now combine Corollary~\ref{cor:finite-gldim-standard} with
Theorem~\ref{thm:standard-algebra} to prove the two results announced in the
Introduction.

\begin{theorem}
\label{thm:main}
Let $A$ be a finite-dimensional representation-finite algebra over an
algebraically closed field.  If $\gldim A<\infty$, then
\[
 \det C(A)=1.
\]
\end{theorem}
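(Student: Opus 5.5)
The plan is to apply Theorem~\ref{thm:standard-algebra} with $M=A_A$, the regular module, using Corollary~\ref{cor:finite-gldim-standard} to supply the standardness hypothesis.

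First, since $\gldim A<\infty$, Corollary~\ref{cor:finite-gldim-standard} shows that $A$ is standard. Second, take $M=A_A$. This is a nonzero finite-dimensional right $A$-module, and
\[
 E=\End_A(A_A)\cong A,
\]
where the isomorphism sends $a\in A$ to left multiplication $x\mapsto ax$. This is a ring isomorphism, not an anti-isomorphism, because $A_A$ is a right module and left multiplications commute with the right action. Hence $\gldim E=\gldim A<\infty$. Third, Theorem~\ref{thm:standard-algebra} gives $\det C(E)=1$.

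It remains to pass from $E$ back to $A$. Under the isomorphism $E\cong A$, a complete set of primitive orthogonal idempotents of $A$ corresponds to one of $E$, and the entries $\dim_{\kfield}e_iAe_j$ are preserved. So $C(E)=C(A)$ up to a simultaneous permutation of rows and columns, which does not change the determinant. Since $A$ is basic by our standing convention, no Morita reduction is needed here. Even if that convention were dropped, the proof of Theorem~\ref{thm:standard-algebra} already replaces $M$ by its basic version, and the Cartan determinant is Morita invariant.

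I expect no real obstacle, since all the substantive work sits in Corollary~\ref{cor:finite-gldim-standard} and Theorem~\ref{thm:standard-algebra}. The only step needing care is the orientation check that $\End_A(A_A)$ is $A$ rather than $A^{\mathrm{op}}$. Even this is harmless: if it were $A^{\mathrm{op}}$, then $C(A^{\mathrm{op}})=C(A)^{T}$, which has the same determinant, and $\gldim A^{\mathrm{op}}=\gldim A$ for finite-dimensional algebras.
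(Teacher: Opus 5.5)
Your proposal is correct and follows the paper's proof exactly: Corollary~\ref{cor:finite-gldim-standard} gives standardness, and Theorem~\ref{thm:standard-algebra} applied to $M=A_A$ with $\End_A(A_A)\cong A$ yields $\det C(A)=1$. Your extra checks, that the isomorphism is $A$ rather than $A^{\mathrm{op}}$ and that it preserves the Cartan matrix up to simultaneous permutation, are accurate details the paper leaves implicit.
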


\begin{proof}
By Corollary~\ref{cor:finite-gldim-standard}, $A$ is standard.  Apply
Theorem~\ref{thm:standard-algebra} to $A$ with the regular module $M=A_A$.
Since $\End_A(A_A)\cong A$, the conclusion is $\det C(A)=1$.
\end{proof}

The following statement is more general than Theorem~\ref{thm:main}.

\begin{corollary}
\label{cor:endomorphism}
Let $A$ be a finite-dimensional representation-finite algebra over an
algebraically closed field $\kfield$, and let $M$ be a nonzero
finite-dimensional right $A$-module with $\gldim\End_A(M)<\infty$.  If
$\gldim A<\infty$, or if $\operatorname{char}\kfield\neq2$, then
\[
 \det C\bigl(\End_A(M)\bigr)=1.
\]
\end{corollary}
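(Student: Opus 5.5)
The plan is to combine the two results already established, exactly as in the proof of Theorem~\ref{thm:main}, with an arbitrary module $M$ in place of the regular module. First, the hypothesis that $\gldim A<\infty$ or $\operatorname{char}\kfield\neq2$ is precisely the hypothesis of Corollary~\ref{cor:finite-gldim-standard}. Applying it, we conclude that $A$ is standard. This is the only place where the disjunctive hypothesis enters.

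Second, with $A$ standard and representation-finite, $M$ nonzero, and $\gldim\End_A(M)<\infty$ assumed, all hypotheses of Theorem~\ref{thm:standard-algebra} hold. That theorem yields $\det C(\End_A(M))=1$ directly. Inside that theorem, the reduction to basic $M$ is already handled by Morita invariance of Cartan determinants and global dimension. So nothing further is needed for non-basic $M$ or for $M$ with repeated summands.

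I expect no genuine obstacle, since both ingredients are stated earlier. The only point worth checking is that $\gldim A$ itself is irrelevant to Theorem~\ref{thm:standard-algebra}. Only finiteness of $\gldim\End_A(M)$ is used there, through Theorem~\ref{thm:wilson}. Hence in the case $\operatorname{char}\kfield\neq2$ we may allow $\gldim A=\infty$ without any further argument.
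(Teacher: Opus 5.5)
Your proposal is correct and is exactly the paper's argument: Corollary~\ref{cor:finite-gldim-standard} turns the disjunctive hypothesis into standardness of $A$, and Theorem~\ref{thm:standard-algebra} then gives $\det C(\End_A(M))=1$. Your remark that this theorem uses only $\gldim\End_A(M)<\infty$, and not $\gldim A<\infty$, matches the paper's observation that finite global dimension of $A$ enters only through standardness.
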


\begin{proof}
By Corollary~\ref{cor:finite-gldim-standard}, $A$ is standard.
Theorem~\ref{thm:standard-algebra} therefore applies to $A$ and $M$.
\end{proof}

\end{document}